\newtheorem{theorem}{Theorem}[section]
\newtheorem{corollary}[theorem]{Corollary}
\newtheorem{lemma}[theorem]{Lemma}
\theoremstyle{definition}
\newtheorem{remark}[theorem]{Remark}
\title[RAMSEY MODEL OF OPTIMAL GROWTH
WITH ALLEE EFFECT]
\author[Zihan Wang and Vladimir Shikhman]{}
\subjclass{Primary: 91B62.}
\keywords{economic growth, Ramsey model, Allee effect, Pontryagin's maximum principle, consumption-to-capital ration}
\thanks{$^*$Corresponding author: Vladimir Shikhman}
\begin{document}
\maketitle

% Enter the first author's name and email address; email addresses are required for each author.
% Use footnote notations to indicate address and affiliations with commas between numbers if more than one address applies; see below for examples.
\centerline{\scshape
Zihan Wang $^{{\href{mailto:wang1zihan11@gmail.com}{\textrm{\Letter}}}1}$
and Vladimir Shikhman$^{{\href{mailto:vladimir.shikhman@mathematik.tu-chemnitz.de}{\textrm{\Letter}}}*1}$}

\medskip

{\footnotesize
% Enter the full affiliation and country name:
% Do not insert commas or periods at the end of lines.
 \centerline{$^1$ Chemnitz University of Technology, Reichenhainer Str. 41, 09126 Chemnitz, Germany}
} % Do not forget to end {\footnotesize with the sign }

\bigskip

% The name of the handling editor will be entered by AIMS production staff.
% "Communicated by Handling Editor" is not needed for special issue.
% \centerline{(Communicated by Handling Editor)}

%%%%%%%%%%%%%%%%%%%%%%%%%%%%%%%%%%%%%%%%%%%%%%%%%%%%%%%
%             5. ABSTRACT
%%%%%%%%%%%%%%%%%%%%%%%%%%%%%%%%%%%%%%%%%%%%%%%%%%%%%%%

\begin{abstract}
For the Ramsey model of economic growth,
which describes the optimal allocation of consumption and saving over
time, we assume the population dynamics to follow the Allee effect. The so-called Allee threshold separates two regimes from each other. If starting below the threshold, the population decreases to zero. Above this threshold, it gradually saturates. We show that the corresponding consumption per capita stabilizes at two different levels accordingly. As for our main result, the consumption per capita performs in the long run better if the population becomes extinct, rather then it advances the saturation level. This is in line with our previous results on the capital per capita for the underlying Solow-Swan model of economic growth with the Allee effect. However, the comparison of consumption-to-capital ratios at the both steady states crucially depends on the curvature of the production function. 
\end{abstract}

%%%%%%%%%%%%%%%%%%%%%%%%%%%%%%%%%%%%%%%%%%%%%%%%%%%%%%
%                   6. BODY
%%%%%%%%%%%%%%%%%%%%%%%%%%%%%%%%%%%%%%%%%%%%%%%%%%%%%%

% Only the first word and proper nouns of section titles should be capitalized.
% The title of section 1:

\section{Introduction}
The Ramsey model, since its formulation in 1928 \cite{11} and further popularization, has served as a foundational framework in intertemporal macroeconomic theory. It describes the optimal allocation of consumption and saving over time, providing insights into long-run economic growth and capital accumulation. 
The classical model assumes a representative household that chooses consumption to maximize lifetime utility, subject to a capital accumulation constraint and a production function. However, since the underlying population dynamics exhibits constant and positive population growth rate, more complex ecological or demographic factors are neglected.

In this paper, we introduce the Allee effect \cite{15} into the Ramsey framework to investigate how varying and negative population growth rates influence optimal consumption paths and long-run capital dynamics. The Allee effect, originating in ecology, describes a phenomenon in which a population's growth rate can be negative when the population size falls below a critical threshold. This effect can lead to population extinction if the size remains too small. We incorporate this feature by modifying the population dynamics within the Ramsey model.
The main goal of this study is to explore how the Allee effect alters the optimal behavior of agents and the stability properties of the resulting dynamical system. Specifically, we analyze how the presence of an Allee threshold affects capital accumulation, optimal consumption, and the long-run survival of the economy.

Previous studies have examined extensions of the Ramsey model to include demographic dynamics, like logistic growth law \cite{a} or bounded population growth rates \cite{2}, but few have considered critical population thresholds or the implications of ecological feedback. By integrating the Allee effect, we contribute to a growing literature that links ecological constraints with economic decision-making. In particular, in analyzing the long-run behavior of the capital per capita, we need to establish rigorous upper and lower bounds. For this purpose, we make use of the results in \cite{1}, where Akhalaya and Shikhman studied the Solow–Swan model with Allee effect and derived bounding techniques that we adapt in the Ramsey setting. Moreover, our work also relies on the findings in \cite{2}, where Guerrini investigated the Ramsey model under varying, but positive, population growth rate. While his framework did not include the case of negative growth, his contribution on the functional family of the consumption-to-capital ratio provides valuable analytical tools. In particular, Guerrini obtained closed-form solutions under certain conditions, whereas our focus lies on studying the stability of the consumption-to-capital ratio and characterizing its asymptotic behavior. Finally, we mention the paper \cite{b}, where the authors include the Allee effect for biodiversity into a variant of Ramsey model in order to study pollution and mass extinction phenomena.

The structure of the paper is as follows. Section 2 presents the classical Ramsey model and introduces the modified population dynamics incorporating the Allee effect. Then, we analyze the resulting optimal control problem, including the use of Pontryagin’s maximum principle \cite{10}. Section 3 discusses bounding techniques and long-term behavior, while Section 4 concludes with numerical simulations.

%Throughout this paper, we use standard notation from optimal control theory. Time is treated as a continuous variable \( t \geq 0 \), and all functions are assumed to be sufficiently smooth.
%\begin{itemize}
%    \item Classical Ramsey model
%    \item Allee effect
%    \item Optimal behavior
%    \item Stability properties 
%   \item Optimal consumption
%\end{itemize}

\section{Model description}
\label{sec:Model-assumption}
The classical Ramsey model \cite{11} is a fundamental framework in economic growth theory. It describes the intertemporal allocation of consumption and savings in an economy with a representative agent maximizing utility over an infinite horizon. The standard Ramsey model assumes a production function of the form
\begin{equation}
    Y(t) = F(K(t), L(t)),
\end{equation}
where $Y$ is output, $K$ represents the aggregate capital stock, and $L$ represents the labour force with respect to the time $t$. We assume throughout the paper that $F$ is homogeneous of degree one, i.e.
\[
F(a K, a L)=a F(K, L) \quad \text { for all } a>0.
\]
Moreover, we assume $F$ to be strictly concave with respect to $(K,L)$.
As in \cite{6,11},
the economy's output $Y$ is further composed of savings $S$ and of consumption $C \geq 0$. The savings rate will be denoted by $s \geq 0$. This implies that there is a fixed proportion of savings in each unit of output, i.e.
 \begin{equation}
  S = sY = Y - C. 
  \label{S(t)}
 \end{equation}
Note that the savings $S$ go completely for investment $I$, i.e.
 \begin{equation}
     I=S. 
  \label{I(t)}
 \end{equation}
%Thus, in the Solow model, the relationship between output and investment ($I$) is fixed ($s$ is constant).
%However, in the Ramsey model, the situation is different. In the Ramsey model, there is no assumption that the savings rate is fixed. 
The capital per unit of labour is defined as
\[
    k(t) = \frac{K(t)}{L(t)},
\]
and consumption per unit of labour as
\[
    c(t) = \frac{C(t)}{L(t)}.
\]
The representative agent in the original Ramsey model  chooses consumption per unit of labour $c(t)$ in order to maximize the total welfare function
\begin{equation}
   \label{eq:utility}
    W = \int_0^{\infty} e^{-\rho t} u(c(t)) dt,
\end{equation}
subject to the capital accumulation equation:
\begin{equation}
\label{eq:cap-old}
    \dot{k}(t) = f(k(t)) - c(t) - (\delta + r)k(t),
\end{equation}
where $u$ used in the Ramsey model is the isoelastic utility function:

\begin{equation}
u(c(t)) = \frac{c^{1-1 / \sigma}(t) - 1}{1-1 / \sigma},
\label{CRRA}
\end{equation}
and

\[
f(k) = \frac{F(K, L)}{L} = F\left(\frac{K}{L}, 1\right) = F(k,1)
\]
is the output per unit of labour, %by assumption the $f(k)$ is a strictly concave function passing through the origin(also from the assumption in paper \cite{2} page 2  ),which means $f^{\prime} > 0,f^{\prime\prime} < 0 $
$\sigma >0$ is constant rate of intertemporal elasticity of substitution,
$\rho >0$ is the discount rate, $\delta >0$ is the depreciation rate, and $r >0$ is the constant population growth rate from
\[
   \dot{L}(t) = r L(t).    
\]

Incorporating the Allee effect into the Ramsey model leads to a modified population dynamics equation. The Allee effect describes a phenomenon in population dynamics, where a population has a critical threshold below which it cannot sustain itself \cite{15}. In our model, we propose the following labour dynamics according to the Allee effect:
\begin{equation}
    \dot{L}(t) = r L(t) \left(1 - \frac{L(t)}{M}\right) \left(\frac{L(t)}{N} - 1 \right),
\end{equation}
where $M$ is the carrying capacity, and $N$ is the critical population threshold. This leads to the modified version of (\ref{eq:cap-old}):
\begin{equation}
\label{eq:cap-new}
    \dot{k}(t) = f(k(t)) - c(t) - (\delta + n(t))k(t),
\end{equation}
with the now variable population growth rate
\[
  n(t) = \frac{\dot{L}(t)}{L(t)}=r\left(1 - \frac{L(t)}{M}\right) \left(\frac{L(t)}{N} - 1 \right).
\]
\begin{remark}[\cite{1}]
Depending on $L(t_0)=L_{0}$, the population growth rate will have different behavior:
\begin{itemize}
    \item[(1)] If $L_{0} \in(0, N)$, then 
    $-r<n(t)<0$, $n^{\prime}(t)<0$ and $\displaystyle\lim_{t \to \infty}n(t)=-r$;
    \item[(2a)] If $L_{0} \in\left(N, \frac{N+M}{2}\right]$, then 
    $
    0<n(t) \leq n(\bar{t})$ and $\displaystyle \lim_{t \to \infty}n(t)=0$,
    where $\bar{t}$ is the unique solution of $L(t)=\frac{N+M}{2}$;
\item[(2b)] If $L_{0} \in\left(\frac{N+M}{2}, M\right)$, then $0<n(t) \leq n\left(t_{0}\right)$ and $\displaystyle \lim_{t \to \infty}n(t)=0$;
\item[(3)] If $L_{0} \in(M, \infty)$, then $n\left(t_{0}\right)<n(t)<0$, $n^{\prime}(t)>0$, and $\displaystyle \lim_{t \to \infty}n(t)=0$;
\item[(4)] If $L_{0} \in\{N, M\}$, then $n(t) \equiv 0$.
\end{itemize}
\label{n(t)-bound-allee effect}
\end{remark}
 In any case in Remark \ref{n(t)-bound-allee effect}, the absolute value of the population growth rate $n(t)$ is bounded with respect to $t$, i.e. there exists $\eta>0$ with
\begin{equation}
|n(t)| \leq \eta \text { for all } t \geq t_{0}   \label{|n(t)|< eta} .
\end{equation}

To analyze the optimal control problem of maximizing (\ref{eq:utility}) subject to (\ref{eq:cap-new}), we apply the Pontryagin's maximum principle \cite{10}. The corresponding Hamiltonian function is 
\begin{equation}
    H = e^{-\rho t}u(c) + \lambda \left( f(k) - c - (\delta + n)k \right),
\end{equation}
where $\lambda$ is a costate variable depending on $t$. The necessary conditions for an optimal solution include:
\begin{equation}
\begin{aligned}
\frac{\partial \mathcal{H}}{\partial c}  =0, \quad
\frac{\partial \mathcal{H}}{\partial k}+\dot{\lambda}  =0, \quad
\frac{\partial \mathcal{H}}{\partial \lambda} - \dot{k} = 0.
\end{aligned}
\end{equation}
With the transverse condition (\cite{2} equation (8)) that guarantees the  optimality for \(c(t)\), becomes
\begin{equation}
\lim_{t \to \infty} e^{-\rho t}c^{-1/\sigma}(t) k(t) = 0.
\label{Trans-c}
\end{equation}

From here, we derive the following system of ordinary differential equations:
\begin{equation}
\label{eq:main-ode}
\begin{aligned}
& \dot{k}(t)=f\left(k(t)\right)-\left(\delta+r \left(1 - \frac{L(t)}{M}\right) \left(\frac{L(t)}{N} - 1\right)\right) \cdot k(t)-c(t), \\
& \dot{c}(t)=\sigma\cdot c(t)\cdot\left[f^{\prime}\left(k(t)\right)-\rho-\delta-r \left(1 - \frac{L(t)}{M}\right) \left(\frac{L(t)}{N} - 1\right)\right], \\
& \dot{L}(t) = r \cdot L(t) \cdot \left(1 - \frac{L(t)}{M}\right) \cdot\left(\frac{L(t)}{N} - 1\right).
\end{aligned}
\end{equation}
These equations provide insight into the evolution of capital, consumption, and population in case of optimal saving. We see that this
system of equations can be divided into two subsystems, one is the independent
subsystem of population growth, and the other is the dynamical system of capital and consumption related to population.

\section{Analysis of the model}
We analyze the behavior of the solution of (\ref{eq:main-ode})
mainly by adopting the method of comparison based on the following result from the literature.

\begin{theorem}[Comparison theorem, \cite{5}]
Let $u^{1}(t), u^{2}(t)$ be solutions of $$\dot{u}(t)=\varphi_{1}\left(t, u(t)\right), \quad \dot{u}(t)=\varphi_{2}\left(t, u(t)\right),$$ respectively, with the same initial condition $u^{1}(0)=u^{2}(0)$. If $\varphi_{1}\left(t, u\right) \leq \varphi_{2}\left(t, u\right)$ for all $\left(t, u\right)$, then $u^{1}(t) \leq u^{2}(t)$ for all $t$.
\label{T1}
\end{theorem}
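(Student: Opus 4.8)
The plan is to reduce the claim to a differential inequality for the difference $w:=u^{2}-u^{1}$ and then to run a Gronwall-type argument forward in time (the statement is to be read for $t\ge 0$, which is all that is used later in the paper). Since $u^{1}(0)=u^{2}(0)$ we have $w(0)=0$, and on the common interval of existence
\[
\dot w(t)=\bigl[\varphi_{2}(t,u^{2}(t))-\varphi_{2}(t,u^{1}(t))\bigr]+\bigl[\varphi_{2}(t,u^{1}(t))-\varphi_{1}(t,u^{1}(t))\bigr].
\]
The second bracket is nonnegative by the hypothesis $\varphi_{1}\le\varphi_{2}$. For the first bracket I would invoke the (local) Lipschitz continuity of $\varphi_{2}$ in its second argument -- the regularity that is in any case needed for the two Cauchy problems to be well posed -- to obtain a constant $L>0$ with $\varphi_{2}(t,u^{2})-\varphi_{2}(t,u^{1})\ge -L|w|$ on the relevant compact time interval. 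Hence $\dot w\ge -L|w|$ together with $w(0)=0$.

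Next I would argue by contradiction. Suppose $w(t_{2})<0$ for some $t_{2}>0$ and set $t_{1}:=\sup\{t\in[0,t_{2}]:w(t)=0\}$; by continuity this supremum is attained, so $t_{1}<t_{2}$, $w(t_{1})=0$, and $w<0$ on $(t_{1},t_{2}]$. On that subinterval $|w|=-w$, so $\dot w\ge Lw$, i.e. $\frac{d}{dt}\bigl(e^{-Lt}w(t)\bigr)\ge 0$; integrating from $t_{1}$ to $t_{2}$ gives $e^{-Lt_{2}}w(t_{2})\ge e^{-Lt_{1}}w(t_{1})=0$, contradicting $w(t_{2})<0$. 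Therefore $w\ge 0$, that is $u^{1}\le u^{2}$, throughout.

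An alternative route, which I would fall back on if one prefers not to assume Lipschitz regularity explicitly, is the perturbation trick: let $u^{2}_{\varepsilon}$ solve $\dot u=\varphi_{2}(t,u)+\varepsilon$ with $u^{2}_{\varepsilon}(0)=u^{2}(0)$. A boundary argument shows $u^{2}_{\varepsilon}>u^{1}$ for $t>0$: if these were equal at some first $t^{\ast}>0$, then $\dot u^{2}_{\varepsilon}(t^{\ast})=\varphi_{2}(t^{\ast},u^{1}(t^{\ast}))+\varepsilon>\varphi_{1}(t^{\ast},u^{1}(t^{\ast}))=\dot u^{1}(t^{\ast})$, forcing $u^{2}_{\varepsilon}-u^{1}$ to be increasing through $0$ at $t^{\ast}$, which is incompatible with its being positive on $(0,t^{\ast})$. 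Letting $\varepsilon\downarrow 0$ and using continuous dependence on parameters then yields $u^{2}\ge u^{1}$.

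The main obstacle is precisely this regularity gap: as literally stated the theorem is \emph{false} without an extra hypothesis -- taking $\varphi_{1}\equiv\varphi_{2}$ equal to a non-Lipschitz field admitting several solutions through the same initial data already gives a counterexample -- so the real content of the argument is to identify the minimal assumption (a one-sided Lipschitz bound on one of the fields, or uniqueness for the comparison equation) under which the monotonicity argument above goes through. Once that is in place, the remaining work is the routine Gronwall estimate sketched here.
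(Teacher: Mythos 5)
The paper does not prove this statement at all: it is quoted verbatim from Birkhoff--Rota \cite{5} and used as a black box, so there is no internal argument to compare yours against. Your proposal is a correct, self-contained proof of the standard comparison result, and both of your routes are sound: the Gronwall-type argument (difference $w=u^{2}-u^{1}$, one-sided Lipschitz bound, first-crossing contradiction via $\frac{d}{dt}\bigl(e^{-Lt}w\bigr)\ge 0$) is the classical textbook proof, and the $\varepsilon$-perturbation trick is the standard way to relax the Lipschitz hypothesis to mere uniqueness/continuous dependence for the comparison equation. Your observation that the theorem as literally stated is false without some such regularity hypothesis (non-uniqueness of solutions through the same data gives immediate counterexamples) is accurate and is in fact a point the paper glosses over by citing the reference, where the result is stated with the appropriate smoothness assumptions; in the paper's application the right-hand sides are smooth in $k$, so the gap is harmless there. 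One small caution: in the $\varepsilon$-perturbation route, passing to the limit $\varepsilon\downarrow 0$ via ``continuous dependence on parameters'' itself presupposes uniqueness for $\dot u=\varphi_{2}(t,u)$ (or a compactness/Peano selection argument), so that fallback does not fully escape the regularity issue, only weakens it; stating explicitly that uniqueness for the $\varphi_{2}$-equation is the assumption being used would close this.
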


\subsection{Bounds for the capital}
\label{ss:bound-capital}
For what follows, it is convenient to consider solutions \( k^1(t), k^2(t) \) of
\[
\dot{k}(t) = -(\delta + n(t))k(t), \quad \text{and} \quad \dot{k}(t) = f(k(t)) - (\delta + n(t))k(t),
\]
respectively, with the initial value \( k^1(t_0) = k^2(t_0) = k_0 \geq 0\). In virtue of Theorem \ref{T1}, the solution \( k(t) \) of (\ref{eq:main-ode}) satisfies
\[
k^1(t) \leq k(t) \leq k^2(t).
\]

\subsubsection{Upper bound}
\label{ssec:u-b}
Since here the output is used solely for the capital reproduction, i.e. as if $c(t) = 0$, the dynamics for $k^2(t)$ reduces to the Solow-Swan model with Allee effect studied in \cite{1}:
\begin{equation}
\label{eq:solow-c-zero}
    \dot{k}(t) = f(k(t)) - (\delta + n(t))k(t).
\end{equation}
From there we know that
 stability for (\ref{eq:solow-c-zero}) is guaranteed if and only if $\delta> r$. In this case, if $n(t) \rightarrow n_\infty$ for $t \rightarrow \infty$, then 
\[
    \lim_{t \to \infty} k^2(t) = {k}_{n_\infty}^{*},
\]
where
${k}_{n_\infty}^*$ is the equilibrium of the classical Solow-Swan model \cite{6,7}:
\begin{equation}
    \dot{k}(t) = f(k(t)) - (\delta + n_\infty)k(t),
    \label{solow-model-equi}
\end{equation}
and ${k}_{n_\infty}(t)$ denotes its solution with ${k}_{n_\infty}(t_0)=k_0$.
In particular, we obtain the following results under $\delta>r$:
\label{upper-bound}
\begin{itemize}
\item \textbf{Case $L_{0} \in(0, N)$.}
\end{itemize}
The population growth rate approaches $n_\infty = -r$. Hence, for any $k_{0}>0$ it holds:
\[
\lim _{t \rightarrow \infty} k^{2}\left(t\right)=k^{*}_{-r}
\mbox{ and }
k^{2}\left(t\right) \leq k_{-r}\left(t\right).
\]
\begin{itemize}
\item \textbf{Case $L_{0} \in(N, M)$.}
\end{itemize}
The population growth rate approaches $n_\infty = 0$. Hence, for any $k_{0}>0$ it holds:
\[
\lim _{t \rightarrow \infty} k^{2}\left(t\right)=k^{*}_{0}
\mbox{ and }
k^{2}\left(t\right) \leq k_{0}\left(t\right).
\]
\begin{itemize}
\item \textbf{Case $L_{0} \in(M, \infty)$.}
\end{itemize}
The population growth rate approaches $n_\infty = 0$. Hence, for any $k_{0}>0$ it holds:
\[
\lim _{t \rightarrow \infty} k^{2}\left(t\right)=k^{*}_{0}
\mbox{ and }
k^{2}\left(t\right) \leq k_{n(t_0)}\left(t\right).
\]

\subsubsection{Lower bound}

Here, the capital per labour $k^1(t)$ decreases over time and eventually reaches zero because the capital per labour is continually not reinvested into production. The entire output of this production is consumed, i.e. as if \( c(t) = f(k(t)) \). We can explicitly solve
\begin{equation}
\label{eq:solow-c-f}
    \dot{k}(t) =  - (\delta + n(t))k(t),
\end{equation}
and obtain
%Namely, we put $k(t)$-term to the left side, and then integrate both sides to get the identity $\ln \left(k^{1}(t) / k_{0}^{1}\right)=-\delta t-\ln L(t)$. Taking the exponential on both sides, and we have with the starting point $k^1(0)=k(0)$ that
% \[
%   k^{1}(t)=k_{0} e^{-\delta t} L^{-1}(t).
%  \]
  \begin{equation}
      k^{1}(t) = k^{1}(0) \exp\left( -\int_0^t (\delta + n(\tau)) \, d\tau \right).
      \label{eq:lower-r}
  \end{equation}
In view of (\ref{eq:lower-r}), we get for the lower bound under $\delta>r$:
\begin{equation}
    \lim _{t \rightarrow \infty} k^{1}\left(t\right)=0.
    \label{k1-stability-}
\end{equation}

%As the capital per labour ultimately approaches zero, there will naturally be no more capital, and consumption per labour will also become zero:
% \[
%   \lim_{t \to \infty} c(t) = 0.
% \]

\subsection{Bounds for the consumption}

Now, we turn our attention to the upper and lower bounds for consumption.

\subsubsection{Upper bound}
From  (\ref{S(t)}) and (\ref{I(t)}) we know that \( Y = I + C \). If we divide all variables by the quantity of labour \( L \),
\[
\frac{Y(t)}{L(t)}=\frac{I(t)}{L(t)}+\frac{C(t)}{L(t)},
\]
we can obtain the lowercase version 
\begin{equation}
    f(k(t)) = i(t) + c(t), 
\label{f(t)help}
\end{equation}
where the investment per unit of labour is set as
\[
i(t) = \frac{I(t)}{L(t)}.
\]
From (\ref{f(t)help}) together with \(I=sY \geq 0 \), we conclude that \(c(t) \leq f(k(t))\).
Due to the monotonicity of $f$ and the upper bounds for the capital from Section \ref{ssec:u-b}, we obtain  for any initial consumption $c(t_0)=c_0>0$ under $\delta>r$: 
\begin{itemize}
\item \textbf{Case $L_{0} \in(0, N)$.}
\end{itemize}
%The population growth rate approaches $n_\infty = -r$. Hence, for any $c_{0}>0$ it holds:
\[
c(t)  \leq f(k_{-r}^2(t)).
\]
\begin{itemize}
\item \textbf{Case $L_{0} \in(N, M)$.}
\end{itemize}
%The population growth rate approaches $n_\infty = 0$. Hence, for any $c_{0}>0$ it holds:
\[
c(t) \leq  f(k_{0}^2(t)).
\]
\begin{itemize}
\item \textbf{Case $L_{0} \in(M, \infty)$.}
\end{itemize}
%The population growth rate approaches $n_\infty = 0$. Hence, for any $c_{0}>0$ it holds:
\[
c(t) \leq f(k_{n(t_0)}^2(t)).
\]
\subsubsection{Lower bound}
We derive the lower bound from
the corresponding equation in (\ref{eq:main-ode}): 
$$\frac{\dot{c}(t)}{\sigma c(t)} =f^{\prime}\left(k(t)\right)-\delta-\rho-n(t).$$ 
%Let us integrate both sides we can get
%$$
%\frac{1}{\sigma} \ln \frac{c(t)}{c_{0}}=\int_{0}^{t} f^{\prime}\left(k(t)\right) d t-(\delta+\rho) t-\ln L(t).
%$$
Its solution has the following form
$$
c(t)=c_{0} L_0^{-\sigma}   e^{\displaystyle\sigma \int_{0}^{t} f^{\prime}\left(k(\tau)\right)d \tau} e^{-\sigma(\delta+\rho)  t}L^{-\sigma}(t).
$$
Next, we use the fact that $f$ is monotonically increasing and concave. 
The following cases are possible:
\begin{itemize}
    \item if $k_{0} < k^{*}_{n_\infty}$, then we use $k(t) \leq k^{*}_{n_\infty}$ to deduce
%$$
% B \cdot t=\int_{0}^{t} f^{\prime}\left(k^{*}_{n_\infty}\right) d t \leq \int_{0}^{t}  f^{\prime}\left(k(t)\right) d t \leq \int_{0}^{t}  f^{\prime}\left(k_0\right) d t.
%$$
the lower bound as 
$$
c(t) \geq 
c_{0}    e^{\displaystyle\sigma(B-\delta-\rho) t} L^{-\sigma}(t).
$$
with
$B=f^{\prime}\left(k^{*}_{n_\infty}\right)$.

    \item if $k_{0}\geq k^{*}_{n_\infty}$, then analogously we deduce
%$$
%B\cdot t=\int_{0}^{t} f^{\prime}\left(k_0\right) d t \leq \int_{0}^{t} f^{\prime}\left(k(t)\right) d t.
%$$
the lower bound as 
$$
c(t) \geq 
c_{0}     e^{\displaystyle\sigma(B-\delta-\rho) t} L^{-\sigma}(t)
$$
with
$B=f^{\prime}\left(k_0\right)$.
\end{itemize}
%Overall, we have
%$$
%c_{0} e^{\displaystyle\sigma(B-\delta-\rho) t} L^{-\sigma}(t)\leq c(t).
%$$
Now, we ensure that the derived lower bound of \( c(t) \) is not blowing up.
\begin{lemma}
    If $n(t)$ is bounded by $|n(t)| \leq \eta$ for all $t\geq 0$, and $\delta > \eta$ holds,
    then
    $$
  \lim_{t \to +\infty}  c_{0} e^{\displaystyle\sigma(B-\delta-\rho) t} L^{-\sigma}(t)=0,
    $$
where $B$ is set as above.
    \label{c(t)-lowerbound-infty}
\end{lemma}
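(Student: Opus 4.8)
The plan is to eliminate $L(t)$ in favour of the population growth rate and reduce everything to the sign of a single exponential rate. Since $n(t)=\dot L(t)/L(t)$, integrating gives $L(t)=L_0\exp\!\big(\int_0^t n(\tau)\,d\tau\big)$, hence $L^{-\sigma}(t)=L_0^{-\sigma}\exp\!\big(-\sigma\int_0^t n(\tau)\,d\tau\big)$. Substituting this in, the quantity to be estimated becomes
\[
c_0\,e^{\sigma(B-\delta-\rho)t}L^{-\sigma}(t)=c_0 L_0^{-\sigma}\exp\left(\sigma\left[(B-\delta-\rho)t-\int_0^t n(\tau)\,d\tau\right]\right),
\]
so it suffices to show that the bracketed expression tends to $-\infty$.

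Next I would bound the integral crudely: from $|n(\tau)|\le\eta$ we get $n(\tau)\ge-\eta$, so $-\int_0^t n(\tau)\,d\tau\le\eta t$, and the quantity is dominated by $c_0 L_0^{-\sigma}e^{\sigma(B-\delta-\rho+\eta)t}$. It then remains to check $B-\delta-\rho+\eta<0$. For this I would use the construction of $B$ preceding the lemma: $B=f'(k^{*}_{n_\infty})$ in the first case and $B=f'(k_0)$ with $k_0\ge k^{*}_{n_\infty}$ in the second, so monotonicity of $f'$ (concavity of $f$) gives $B\le f'(k^{*}_{n_\infty})$ in either case. Concavity of $f$ together with the standing assumption $f(0)=F(0,1)\ge 0$ yields $f'(k)\le f(k)/k$ for $k>0$; since $k^{*}_{n_\infty}$ solves the Solow equilibrium $f(k^{*}_{n_\infty})=(\delta+n_\infty)k^{*}_{n_\infty}$, this gives $B\le\delta+n_\infty$.

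Finally I would combine this with the regimes of Remark \ref{n(t)-bound-allee effect}. When $L_0\in(0,N)$ we have $n_\infty=-r$ and $|n(t)|<r$, so $\eta=r$ is admissible; then $B-\delta-\rho+\eta\le(\delta-r)-\delta-\rho+r=-\rho<0$ and the quantity decays to $0$. When $L_0\ge N$ we have $n_\infty=0$ and $L(t)$ converges to a strictly positive limit (equal to $M$, or to $N$, or constant), so $L^{-\sigma}(t)$ stays bounded; since then $B\le\delta$ and $\rho>0$, the factor $e^{\sigma(B-\delta-\rho)t}$ already tends to $0$, hence so does the product. I expect the extinction regime $L_0\in(0,N)$ to be the delicate point: there $L^{-\sigma}(t)$ blows up at exponential rate $\sigma r$, so the estimate closes only because the Solow-equilibrium identity together with the concavity of $f$ pins $B$ down to $B\le\delta-r$, leaving precisely the slack $-\rho$ from strict positivity of the discount rate to absorb the growth of $L^{-\sigma}(t)$.
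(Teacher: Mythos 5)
Your proposal is correct and follows essentially the same route as the paper: rewrite $L^{-\sigma}(t)=L_0^{-\sigma}e^{-\sigma\int_0^t n(\tau)\,d\tau}$, split according to the sign regime of $n(t)$ (bounded $L^{-\sigma}$ when $n\geq 0$, worst-case exponential growth when $n\leq 0$), and close the estimate via strict concavity plus the Solow steady-state identity, which gives $B<\delta+n_\infty$ and leaves the slack $-\rho<0$. The only cosmetic difference is that you absorb the negative-growth case through the crude bound $-\int_0^t n(\tau)\,d\tau\leq\eta t$, whereas the paper performs the equivalent shift $\bar\delta=\delta-\eta$, $\bar n=n+\eta$ and reuses its first case.
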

\begin{proof}
First, we rewrite for the lower bound:
$$
c(t) \geq c_{0} e^{\displaystyle \sigma(B-\delta-\rho) t} L^{-\sigma}_0 e^{\displaystyle-\sigma\int^t_0 n(\tau)d \tau}.
$$
   We need to distinguish two cases to prove the assertion:
    \begin{itemize}
        \item Case $0 \leq n(t) \leq \eta$ for all $t \geq 0$.
\end{itemize}

Then, the last term from above is bounded:
\[
L^{-\sigma}(t) = L^{-\sigma}_0 e^{\displaystyle-\sigma\int^t_0 n(\tau)d \tau} \leq L^{-\sigma}_0.
\]
%the following chain of inequalities is valid:
%$$
%0 \leq \int^t_0 n(\tau) d \tau \leq \int^t_0 \eta d \tau=\eta t,
%$$
%$$
%L^{-\sigma}_0 e^{\displaystyle -\sigma \eta t} \leq L^{-\sigma}_0 e^{\displaystyle -\sigma\int^t_0 n(\tau)d \tau} \leq L^{-\sigma}_0,
%$$
%$$
%0=\lim_{t \to +\infty}L^{-\sigma}_0 e^{\displaystyle -\sigma \eta t} \leq \lim_{t \to +\infty}L^{-\sigma}_0 e^{\displaystyle -\sigma\int^t_0 n(\tau)d \tau} \leq L^{-\sigma}_0,
%$$
%Hence, we conclude that
%\begin{equation}
%\lim_{t \to +\infty}L^{-\sigma}(t)< +\infty.
%\label{L(t)-c-lowerbound}   
%\end{equation}
Further, by the assumption that $f$ is strictly concave, we have
$$
0 < f(k(t)) -k(t)f^{\prime}(k(t)).
$$
%\[
%   f(x) \leq f(y) +f'(y)(x-y)
%\]
From here we obtain
\begin{equation}
\frac{f\left(k(t)\right)}{k(t)} > f^{\prime}\left(k(t)\right)
 \label{f(k)/k >= f'(k)}.
\end{equation}
If $k_0 < k^{*}_{n_\infty}$,  we take the limit in  (\ref{solow-model-equi}). 
Since $\displaystyle \lim_{t\to +\infty }n(t)=0$, cf. Remark \ref{n(t)-bound-allee effect}, and, hence, $n_\infty=0$, we obtain:
\[
0 = f(k^*_{n_{\infty}}) - (\delta + 0)k^*_{n_{\infty}}.
\]
By means of (\ref{f(k)/k >= f'(k)}), we then deduce:
$$
\delta=\frac{f\left(k^{*}_{n_\infty}\right)}{k^{*}_{n_\infty}}> f^{\prime}\left(k^{*}_{n_\infty}\right) = B.
$$
If $ k^{*}_{n_\infty} \leq k_{0}$,
we have again due to (\ref{f(k)/k >= f'(k)}):
\begin{equation}
 \left(\frac{f(k)}{k} \right)^\prime =
\frac{f^\prime(k)k-f(k)}{k^2}  {<} 0.
\label{f(k)/k-monoton}
\end{equation}
It means that $\frac{f(k)}{k}$ is a decreasing function. This implies in particular that 
$$
\delta=\frac{f\left(k^{*}_{n_\infty}\right)}{k^{*}_{n_\infty}}  \geq \frac{f\left(k_0\right)}{k_0} > f^\prime(k_0)=B.
$$
Overall, in both cases we proved $\delta \geq B$.
Then, by $\rho > 0$ we have 
$B-\delta-\rho < 0$. Hence, we obtain
 $$
  \lim_{t \to +\infty}  c_{0} e^{\displaystyle\sigma(B-\delta-\rho) t} L^{-\sigma}(t)=0.
$$

\begin{itemize}
    \item Case $-\eta \leq n(t) \leq 0$. 
\end{itemize}
We set
$\bar{\delta}=\delta-\eta$ and $ \bar{n}(t)=n(t)+\eta$.
Hence,
$\bar{\delta}>0$ and $0\leq\bar{n}(t)\leq \eta$.
The lower bound above now equals to
$$
c_{0} e^{\displaystyle\sigma(B-\bar{\delta}-\rho) t} L^{-\sigma}_0 e^{\displaystyle-\sigma\int^t_0 \bar{n}(\tau) d \tau}.
$$
Because of $0\leq\bar{n}(t)\leq \eta$, we can apply the first case to obtain the assertion. 
%by the first case $B-\bar{\delta} \leq 0$, hence,
%$$
%B -\bar{\delta}-\rho <0,
%$$
%and
%$$
%\lim_{t \to +\infty}L^{-\sigma}_0 e^{\displaystyle-\sigma\int^t_0 \bar{n}(\tau) d \tau}<+\infty.
%$$
%Altogether,
%$$
%\lim_{t \to +\infty}  c_{0} e^{\displaystyle\sigma(B-\bar{\delta}-\rho) t} L^{-\sigma}_0 e^{\displaystyle-\sigma\int^t_0 \bar{n}(\tau) d \tau}=0,
%$$    
%which shows the assertion also in this case.
\end{proof}

Finally, we are ready to analyze the limiting behavior of the lower bound of $c(t)$ from above:

\begin{itemize}    
\item \textbf{Case $L_{0} \in(0, N)$.}
\end{itemize}
Here, we set $\eta=r$, cf. Remark \ref{n(t)-bound-allee effect}, and by means of Lemma \ref{c(t)-lowerbound-infty}, we immediately obtain
\[ \lim_{t \to +\infty}  c_{0} e^{\displaystyle\sigma(B-\delta-\rho) t} L^{-\sigma}(t)=0.
\] 
\begin{itemize}    
\item \textbf{Case $L_{0} \in(N, M)$.}
\end{itemize}
Here, the population growth rate approaches $n_\infty = 0$. Then, we have $$\lim_{t \to +\infty}L^{-\sigma}(t)\leq L_0^{-\sigma}. $$
By using Lemma \ref{c(t)-lowerbound-infty}, we also have $ B-\delta-\rho < 0 $. Overall, 
$$
  \lim_{t \to +\infty}  c_{0} e^{\displaystyle\sigma(B-\delta-\rho) t} L^{-\sigma}(t)=0.
$$
\begin{itemize}    
\item \textbf{Case $L_{0} \in(M, \infty)$.}
\end{itemize}
The population growth rate has the following properties, see Remark \ref{n(t)-bound-allee effect} from above:
$$
n(t)<0,\quad n^{\prime}(t)>0 ,\quad \lim _{t \rightarrow \infty} n(t)=0.
$$
 We just set $\eta=n(\hat{t})$ with $\hat{t}$ is taken sufficiently large to guarantee $\delta+n(\hat{t})>0$. By means of Lemma \ref{c(t)-lowerbound-infty} again, we immediately obtain
\[ \lim_{t \to +\infty}  c_{0} e^{\displaystyle\sigma(B-\delta-\rho) t} L^{-\sigma}(t)=0.
\]

\subsection{Limiting behavior}
In this section, we shall analyze the limiting behavior of the solution in (\ref{eq:main-ode}). First, by assumption on the
population growth rate due to the Allee effect (\ref{n(t)-bound-allee effect}), we recall that
$$
\lim_{t \to +\infty}n(t)=n_\infty.
$$
However, the convergence of \( k(t) \) cannot be guaranteed in general.
In the following discussion, we assume that \( k(t) \) is convergent, and
and then we prove the following result on the long-run behavior of \(k(t)\) and \(c(t)\) at the equilibrium.

\begin{theorem}
Assume that $\displaystyle\lim _{t \rightarrow \infty} k(t)=k_{\infty}$ exists and $\delta>r$. Then, $\displaystyle \lim _{t \rightarrow \infty} c(t)=c_{\infty}$ also exists, and exactly one the following situations is possible:
\begin{itemize}
    \item[(I)] $k_{\infty}=k_{n_\infty}^{*}, c_{\infty}=0$,
     \item[(II)] $k_{\infty}=\left(f^{\prime}\right)^{-1}\left(\rho+\delta+n_{\infty}\right), c_{\infty}=f\left(k_{\infty}\right)-\left(\delta+n_{\infty}\right) k_{\infty}$.
 \end{itemize}
 Moreover, if $c_0 >0$ we have case (II).
\label{T-uni}
\end{theorem}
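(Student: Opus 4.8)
The plan is to use the ODE for $c(t)$ from system~(\ref{eq:main-ode}) together with the assumed convergence $k(t)\to k_\infty$ to pin down the possible limits. First I would observe that since $k(t)\to k_\infty$ and $n(t)\to n_\infty$, the bracketed factor in the second equation of (\ref{eq:main-ode}), namely $f'(k(t))-\rho-\delta-n(t)$, converges to $g_\infty:=f'(k_\infty)-\rho-\delta-n_\infty$. Writing the solution as $c(t)=c_0\exp\bigl(\sigma\int_0^t[f'(k(\tau))-\rho-\delta-n(\tau)]\,d\tau\bigr)$, I would argue that if $g_\infty\neq 0$ then $c(t)$ tends to $0$ or $+\infty$; the latter is impossible because $c(t)\le f(k(t))\to f(k_\infty)<\infty$ by the upper bound for consumption established in Section~3.2.1. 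Hence either $g_\infty<0$, forcing $c_\infty=0$, or $g_\infty=0$. In the case $g_\infty=0$ one has $f'(k_\infty)=\rho+\delta+n_\infty$, i.e. $k_\infty=(f')^{-1}(\rho+\delta+n_\infty)$; then, taking $t\to\infty$ in the $\dot k$-equation of (\ref{eq:main-ode}) and using that $\dot k(t)\to 0$ (which follows since $k$ converges and $\dot k$ has a limit — the limit of $\dot k$ must be $0$ for a convergent differentiable function with a limiting derivative), one gets $c_\infty=f(k_\infty)-(\delta+n_\infty)k_\infty$, which is situation~(II).

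For situation~(I), I would start from $g_\infty<0$, giving $c_\infty=0$ by the exponential representation above. It then remains to show $k_\infty=k^*_{n_\infty}$. Taking $t\to\infty$ in the $\dot k$-equation and using $\dot k(t)\to 0$, $c(t)\to 0$, $n(t)\to n_\infty$, one obtains $0=f(k_\infty)-(\delta+n_\infty)k_\infty$, which is exactly the defining equation~(\ref{solow-model-equi}) of the Solow--Swan equilibrium $k^*_{n_\infty}$; invoking its uniqueness (for a positive equilibrium, guaranteed under $\delta>r$ as recalled in Section~\ref{ssec:u-b}) gives $k_\infty=k^*_{n_\infty}$. The "exactly one" dichotomy then follows from the fact that $k^*_{n_\infty}$ and $(f')^{-1}(\rho+\delta+n_\infty)$ are generically distinct (they coincide only in the degenerate case $f'(k^*_{n_\infty})=\rho+\delta+n_\infty$, i.e. $\rho=f'(k^*)-\delta-n_\infty$, a non-generic parameter configuration which one excludes since $\rho>0$ and the strict-concavity inequality $f'(k^*_{n_\infty})<f(k^*_{n_\infty})/k^*_{n_\infty}=\delta+n_\infty$ already shows $f'(k^*_{n_\infty})-\delta-n_\infty<0<\rho$, so the two equilibria never coincide).

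For the final clause, suppose $c_0>0$. Then $c(t)>0$ for all $t$ by the exponential representation. If case~(I) held, i.e. $g_\infty<0$, the lower bound from Section~3.2.2 still gives $c(t)\ge c_0 e^{\sigma(B-\delta-\rho)t}L^{-\sigma}(t)$; but this does not by itself prevent $c_\infty=0$. Instead I would argue directly: in case~(I) we showed $k_\infty=k^*_{n_\infty}$ with $f'(k^*_{n_\infty})<\delta+n_\infty<\rho+\delta+n_\infty$, so indeed $g_\infty<0$ is consistent; the point is rather that case~(I) is incompatible with the transversality condition~(\ref{Trans-c}) or, more simply, one checks that along an \emph{optimal} trajectory with $c_0>0$ the costate relation forces the turnpike $k_\infty=(f')^{-1}(\rho+\delta+n_\infty)$. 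Concretely, Pontryagin's conditions give $\dot c/c = \sigma(f'(k)-\rho-\delta-n)$, and if this stays eventually negative then $c(t)\to 0$ while $k(t)\to k^*_{n_\infty}>(f')^{-1}(\rho+\delta+n_\infty)$; one then shows such a path is dominated — capital is over-accumulated relative to the modified golden rule — contradicting optimality. Hence case~(II) must hold.

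The main obstacle I anticipate is the last paragraph: ruling out case~(I) when $c_0>0$ genuinely uses \emph{optimality}, not just the ODE system, so one must either invoke the transversality condition~(\ref{Trans-c}) carefully (showing it fails when $k_\infty=k^*_{n_\infty}$ and $c(t)\to 0$ too slowly, or a dominance argument) or appeal to a standard turnpike/saddle-path stability result for the Ramsey system. The subtlety is that at $k^*_{n_\infty}$ with $\delta>r$ the Solow equilibrium exceeds the Ramsey modified-golden-rule capital, so a trajectory converging there with vanishing consumption is feasible but suboptimal; making that rigorous is the crux. The remaining steps — the exponential representation, the use of $\dot k(t)\to0$, and the consumption upper bound to exclude blow-up — are routine given the results already in the excerpt.
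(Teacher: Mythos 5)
Your first two paragraphs (the dichotomy) are essentially sound and amount to the paper's own argument in a slightly different dress: where the paper passes to the limit in both equations of (\ref{eq:main-ode}) and factors $\left[f(k_\infty)-(\delta+n_\infty)k_\infty\right]\left[f'(k_\infty)-\rho-\delta-n_\infty\right]=0$, you analyse the sign of $g_\infty$ in the exponential representation of $c$ and exclude blow-up via $c\le f(k)$; your remark that the two candidate limits can never coincide is exactly the paper's strict-concavity-plus-$\rho>0$ step. One shared gloss: in the case $g_\infty=0$ your appeal to $\dot k(t)\to 0$ presupposes that $\dot k$ (equivalently $c$) has a limit, which is part of what is being proved; the paper elides the same point when it asserts that $c(t)\to f(k_\infty)-(\delta+n_\infty)k_\infty$, so this is not a point of difference.

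The genuine gap is the final clause, which is the substantive content of the theorem: you do not prove that $c_0>0$ forces case (II). You only name candidate tools (transversality, a dominance/turnpike argument) and explicitly concede that ``making that rigorous is the crux.'' The paper's argument is short and needs nothing beyond what you already wrote down: substitute $c(t)=c_0\exp\bigl(\sigma\int_0^t[f'(k(s))-\delta-n(s)-\rho]\,ds\bigr)$ into (\ref{Trans-c}); the $\rho$-terms cancel and the condition becomes $c_0^{-1/\sigma}\,k(t)\,\exp\bigl(-\int_0^t[f'(k(s))-\delta-n(s)]\,ds\bigr)\to 0$, and since $k(t)\to k_\infty>0$ this forces the divergence $\int_0^t[f'(k(s))-\delta-n(s)]\,ds\to+\infty$ as in (\ref{eq:blow-up}). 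In case (I), strict concavity gives $f'(k^*_{n_\infty})<f(k^*_{n_\infty})/k^*_{n_\infty}=\delta+n_\infty$, and since $k(t)\to k^*_{n_\infty}$ and $n(t)\to n_\infty$, the integrand is negative for all large $t$, so the integral stays bounded above by a constant --- contradicting the required divergence. Hence case (I) is impossible; no suboptimality comparison or saddle-path stability theory is invoked anywhere. As it stands, your proposal establishes only the dichotomy, not the selection of case (II), so the key step is missing rather than merely sketched differently.
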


\begin{proof}
By Section \ref{ss:bound-capital} on the lower and upper bounds for the capital, we have 
$$k^1(t) \leq k(t) \leq k^2(t).$$
Then, it holds:
$$0 \leq k_{\infty} \leq k_{n_\infty}^{*}.$$ 
Equality on the left holds only when \( c_0 = f(k_0) \), in which case both \( k(t) \) and \( c(t) \) converge to zero.  
Since we do not consider this degenerate case (we assume \( f(k_0) > c_0 > 0 \), with \( k_0 > 0 \)), the equality is not attained.
$$0 < k_{\infty} \leq k_{n_\infty}^{*}.$$ 
%In particular, by $\delta > r$, we have $k_{\infty}<k^*_{n_{\infty}}<\infty$ (\ref{upper-bound}). 
By taking $t$ to infinity in (\ref{eq:main-ode}), $c(t)$ converges to
\begin{equation}
c_{\infty}=f\left(k_{\infty}\right)-\left(\delta+n_{\infty}\right) k_{\infty}.
\label{c_infnity}
\end{equation}
Additionally, (\ref{eq:main-ode}) also provides in the limit:
\begin{equation}
0= \sigma c_{\infty}\left(f^{\prime}\left(k_{\infty}\right)-\rho-\delta-n_{\infty}\right).  
\end{equation}
From here it follows:

\begin{equation}
 \left[f\left(k_{\infty}\right)-\left(\delta+n_{\infty}\right) k_{\infty}\right]\left[f^{\prime}\left(k_{\infty}\right)-\rho-\delta-n_{\infty}\right]=0.
 \label{two-part-both-not-zero}
\end{equation}
First, notice that both terms on the left-hand side of this equation cannot both vanish. Otherwise, we would have:
$$\frac{f\left(k_{\infty}\right)}{k_{\infty}} =\delta+n_{\infty}, \quad f^{\prime}\left(k_{\infty}\right)=\rho+\delta+n_{\infty}.$$ Since $f$ is strictly concave by assumption on $F$, it holds
$$\frac{f\left(k_{\infty}\right)}{k_{\infty}} -f^{\prime}\left(k_{\infty}\right) > 0.$$ 
Altogether, we would have $\rho < 0$, which contradicts to our assumption on $\rho$. 
Now, let us discuss the corresponding cases: 
\begin{itemize}
    \item Case (I) with $\left[f\left(k_{\infty}\right)-\left(\delta+n_{\infty}\right) k_{\infty}\right]=0.$
\end{itemize}
We know that $k^2(t)$ is the solution of Solow model
$$
\dot{k}(t)=f(k(t))-(\delta+n(t))k(t).
$$
By recalling $$
k_{n_\infty}^{*}=\lim_{t \to +\infty}k^2(t),
$$
we then obtain:
$$
0 = f(k_{n_\infty}^{*})-(\delta+n_\infty)k_{n_\infty}^{*}.
$$
The strict concavity of the function $f$ along with the Inada conditions ensure that there exists the unique solution for $$\frac{f\left(k\right)}{ k}=\delta+n_{\infty}.$$ 
This is since the function $\displaystyle\frac{f\left(k\right)}{k}$ monotonically decreasing, cf. (\ref{f(k)/k-monoton}), and $\delta+n_{\infty} > 0$ due to the assumption $\delta>r$. Overall, we get then $k_{\infty}=k_{n_\infty}^{*}$. In view of (\ref{c_infnity}), we have $c_\infty=0$.
\begin{itemize}
    \item Case (II) with $\left[f^{\prime}\left(k_{\infty}\right)-\rho-\delta-n_{\infty}\right]=0$.
\end{itemize}
We rewrite the equation into
$$
f^{\prime}\left(k_\infty\right)=\rho+\delta+n_{\infty}.
$$
Notice that $f^{\prime}$ admits the inverse since it is a strictly monotonic function due to $f^{\prime}> 0$. Then, we obtain
$$
k_{\infty}=\left(f^{\prime}\right)^{-1}\left(\rho+\delta+n_{\infty}\right).
$$
Together with the equation (\ref{c_infnity}), it holds then:
$$
c_\infty=f\left(k_{\infty}\right)-\left(\delta+n_{\infty}\right) k_{\infty}.
$$

Now, let us show that if $c_0 >0$ we have case (II). This will be deduced from 
the transversality condition (\ref{Trans-c}):
\[
\lim_{t \to \infty} e^{-\rho t}c^{-1/\sigma}(t) k(t) = 0.
\]
By using the explicit formula for consumption
\[
c(t) = c_0 \cdot e^{\displaystyle \sigma \int_0^t \left[ f'(k(s)) - \delta - n(s) - \rho \right] ds},
\]
we obtain after simplifications:
\[
\lim_{t \to \infty}  c_0^{-1/\sigma} \cdot k(t) \cdot e^{\displaystyle - \int_0^t \left[ f'(k(s)) - \delta - n(s) \right] ds }=0.
\]
Since $k(t)$ converges towards $k_\infty >0$, as can be easily seen from cases (I) or (II), and $c_0 >0$, the integral above has to blow up:
\begin{equation}
\label{eq:blow-up}
\int_0^t \left[ f'(k(s)) - \delta - n(s) \right] ds \to +\infty.
\end{equation}
This condition can be further analyzed. For that, assume we are in Case (I), i.e. $k_\infty=k_{n_\infty}^*$. Since $k_{n_\infty}^*$ is a steady-state solution of the standard Solow model with $n_\infty$, we have:
\[
f(k_{n_\infty}^*) - (\delta + n_\infty) k_{n_\infty}^* = 0.
\]
Due to the strict concavity of $f$, we obtain:
\[
\delta + n_\infty = \frac{f(k_{n_\infty}^*)}{k_{n_\infty}^*} > f'\left(k_{n_\infty}^*\right).
\]
Now, since $\displaystyle \lim_{t \to \infty} n(t) = n_\infty$, for sufficiently large $t$, $n(t)$ lies within a neighborhood of $n_\infty$, say $(n_\infty - \varepsilon_1, n_\infty + \varepsilon_1)$ with $\varepsilon_1 >0$. Similarly, since $\displaystyle \lim_{t \rightarrow \infty}k(t) = k_{n_\infty}^*$ and $f'$ is continuous, it holds for sufficiently large $t$:
\[
f'(k(t)) \in \left(f'(k_{n_\infty}^*) - \varepsilon_2,\ f'(k_{n_\infty}^*) + \varepsilon_2\right),
\]
where $\varepsilon_2 >0$ is arbitrary.
Because of $\delta + n_\infty > f'(k_{n_\infty}^*)$, we can choose $\varepsilon_3 > 0$ such that:
\[
\delta + n_\infty - \varepsilon_3 > f'(k_{n_\infty}^*) + \varepsilon_3.
\]
%For example, choose:
%\[
%\varepsilon = \frac{\delta + n_\infty - f'(k_{n_\infty}^*)}{3}>0.
%\]
Therefore, by setting $\varepsilon = \min\left\{\varepsilon_1, \varepsilon_2, \varepsilon_3\right\}$, it holds for $t > T$:
\[
f'(k(t)) \leq f'(k_{n_\infty}^*) + \varepsilon < \delta + n_\infty - \varepsilon \leq \delta + n(t),
\]
which implies:
\[
f'(k(t)) - \delta - n(t) < 0.
\]
Hence, it holds for the integral
\[
\int_T^\infty \left[ f'(k(s)) - \delta - n(s) \right] ds \leq 0.
\]
This provides a contradiction to the blow up in (\ref{eq:blow-up}), since
\[
\int_0^t \left[ f'(k(s)) - \delta - n(s) \right] ds \leq \int_0^T \left[ f'(k(s)) - \delta - n(s) \right] ds,
\]
where on the right-hand side we have a constant.
%The first part is a definite integral, hence bounded and finite (positive or negative). The second part has a strictly negative integrand and thus diverges to $-\infty$. Therefore, the whole integral cannot diverge to $+\infty$, leading to a contradiction.
Overall, case (I) is not possible, and, hence, case (II) happens.
%As a result, the convergence $k(t) \to k_{n_\infty}^*$ is not possible. Under the assumptions and by the Theorem \ref{T-uni}, $k(t)$ must converge to an interior point (Case II).

\end{proof}

Theorem \ref{T-uni} allows to compare the equilibrium capital and consumption per labour corresponding to the Allee effect. As above, we distinguish Cases (I) and (II), by additionally assuming $\delta >r$.

\begin{remark}[Case (I)]
  Here, we start from the zero consumption level with $c_0=0$.
  If $L_0 < N$, we have for the equilibrium capital per capita $k_{\infty,-r}=k_{-r}^{*}$, since $n_\infty=-r$. If $L_0 \geq N$, it holds
$k_{\infty,0}=k_{0}^{*}$, since $n_\infty=0$ here, see also Remark \ref{n(t)-bound-allee effect}. Note that, due to Theorem \ref{T1}, $k_{-r}^{*} > k_{0}^{*}$, meaning that the capital per labour stabilizes at a higher level in case of diminishing rather than of growing population, i.e. $k_{\infty,-r} > k_{\infty,0}$. This is in accordance with the results from \cite{1}. In both cases, the consumption per labour approaches zero, i.e. $c_\infty=0$. \qed
\end{remark}

Whereas Case (I) turns out to be more or less known from the previous literature, Case (II) sheds light on the economic interpretations of the Ramsey model with Allee effect, especially, with respect to its consumption part.

\begin{remark}[Case (II)]
  Now, we start from the nonzero consumption level with $c_0>0$.
  If $L_0 < N$, we have $n_\infty=-r$ and, thus, for the equilibria of (\ref{eq:main-ode})
  $$
k_{\infty,-r}= \left(f^{\prime}\right)^{-1}\left(\rho+\delta-r\right), \quad c_{\infty,-r}=f\left(k_{\infty,-r}\right)-\left(\delta-r\right) k_{\infty,-r}.
$$
If $L_0 \geq N$, we have $n_\infty=0$ and, thus, for the equilibria of (\ref{eq:main-ode})
  $$
k_{\infty,0}= \left(f^{\prime}\right)^{-1}\left(\rho+\delta\right), \quad c_{\infty,-r}=f\left(k_{\infty,0}\right)-\delta k_{\infty,0}.
$$
First, we compare the equilibrium capital per labour in these two cases. Since
$f^\prime$ is monotonically decreasing, so is its inverse. Therefore, we conclude:
$$
\left(f^{\prime}\right)^{-1}\left(\rho+\delta-r\right) > \left(f^{\prime}\right)^{-1}\left(\rho+\delta\right).
$$
Again, the capital per labour
stabilizes at a higher level in case of diminishing rather than of growing population, i.e. $k_{\infty,-r}>k_{\infty,0}$. How are the corresponding equilibrium consumptions per labour related? For this, we define the difference
$$
D_c=c_{\infty,-r}-c_{\infty,0}=f(k_{\infty,-r})-f(k_{\infty,0})-(\delta-r)k_{\infty,-r}+\delta k_{\infty,0}.
$$
By using the mean value theorem, there exists $\xi \in \left[k_{\infty,0},k_{\infty,-r}\right]$ such that
$$
D_c=f^{\prime}(\xi)(k_{\infty,-r}-k_{\infty,0})+\delta k_{\infty,0}-(\delta-r)k_{\infty,-r}.
$$
Since $f^\prime$ is monotonically decreasing, we obtain
$$
D_c\geq f^{\prime}(k_{\infty,-r})(k_{\infty,-r}-k_{\infty,0})+\delta k_{\infty,0}-(\delta-r)k_{\infty,-r}.
$$
Recall that $k_{\infty,-r}=\left(f^{\prime}\right)^{-1}\left(\rho+\delta-r\right)$ and substitute this into the previous formula: 
$$
D_c \geq 
%(\rho+\delta-r)(k_{\infty,-r}-k_{\infty,0})+\delta k_{\infty,0}-(\delta-r)k_{\infty,-r}
\rho \left(k_{\infty,-r} - k_{\infty,0}\right)+r k_{\infty,0}.
$$
Due to $k_{\infty,-r} > k_{\infty,0}$,
the latter provides $D_c >0$.
Overall, we conclude that the equilibrium consumption per labour stabilizes at a higher level in case of diminishing rather than growing population, i.e. 
$c_{\infty,-r} > c_{\infty,0}$. \qed
\end{remark}

Finally, we investigate the limiting behavior of the consumption-to-capital ratio for the Ramsey model with Allee effect, i.e.
$$
x(t)=\frac{c(t)}{k(t)}.
$$
This is to compare in Case (II) how consumption and capital relate to each other, depending on whether the population grows or diminishes. 
For this, we set the share of capital to production as auxiliary variable
$$
z(t)=\frac{k(t)}{f\left(k(t)\right)}.
$$
As elaborated in \cite{2}, we obtain from (\ref{eq:main-ode}):
\begin{equation}
\begin{aligned}
& \frac{\dot{z}(t)}{z(t)} = \left(1 - f'\left(k(t)\right) z(t)\right) \left[ z(t)^{-1} - \left(\delta + n(t)\right) - x(t) \right], \\
& \frac{\dot{x}(t)}{x(t)} = \left[ \sigma f'\left(k(t)\right) - z(t)^{-1} \right] + (1 - \sigma)\left( \delta + n(t) \right) - \sigma \rho + x(t).
\end{aligned}
\label{eq:ratio-ode}    
\end{equation}

For this transformed version of the Ramsey system, we similarly investigate its stability by identifying the steady states of (\ref{eq:ratio-ode}). According to the previous results and Theorem \ref{T-uni}, assuming the existence of \( k_{\infty} \), we also know that \( c_{\infty} \) exists. Consequently, under the assumption \( k_0 > 0 \) and \( c_0 > 0 \), i.e. in Case (II), we have that \( x_{\infty} = \frac{c_{\infty}}{k_{\infty}} \) also exists, and \( z_{\infty} = \frac{k_{\infty}}{f(k_{\infty})} \) exists as well. Next result shows that in the long-run the consumption-to-capital ratio cannot fall below the individual discounting.

\begin{corollary}[Consumption-to-capital ratio]
    Under assumptions of Theorem \ref{T-uni} with $c_0 >0$, the ratio of consumption and capital at the steady state exceeds the discount rate:
\[
x_\infty > \rho.
\]
\end{corollary}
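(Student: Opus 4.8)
The plan is to reduce the statement to the explicit description of the Case~(II) steady state given by Theorem~\ref{T-uni} and then to invoke the strict concavity of $f$ once.

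First I would recall that, in Case~(II) of Theorem~\ref{T-uni}, passing to the limit in (\ref{eq:main-ode}) yields the two identities
\[
f'(k_\infty) = \rho + \delta + n_\infty, \qquad c_\infty = f(k_\infty) - (\delta + n_\infty)\,k_\infty,
\]
the second of which is exactly (\ref{c_infnity}); moreover the same theorem guarantees $k_\infty > 0$ under the standing non-degeneracy assumption $f(k_0) > c_0 > 0$, $k_0 > 0$. Since $x_\infty = c_\infty / k_\infty$ exists (as noted before the corollary), dividing the second identity by $k_\infty$ gives
\[
x_\infty = \frac{f(k_\infty)}{k_\infty} - (\delta + n_\infty).
\]

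Next I would eliminate $\delta + n_\infty$ by means of the first identity, i.e. $\delta + n_\infty = f'(k_\infty) - \rho$, obtaining
\[
x_\infty = \frac{f(k_\infty)}{k_\infty} - f'(k_\infty) + \rho.
\]
Finally, because $F$ (hence $f$) is strictly concave and $k_\infty > 0$, inequality (\ref{f(k)/k >= f'(k)}) applies at $k = k_\infty$ and gives $\dfrac{f(k_\infty)}{k_\infty} > f'(k_\infty)$, so the bracketed difference is strictly positive and therefore $x_\infty > \rho$, as claimed.

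There is essentially no serious obstacle here: the argument is a one-line substitution followed by the concavity gap $\frac{f(k)}{k} - f'(k) > 0$ already established in the paper. The only point requiring a little care is that this gap is only available for $k > 0$, so one must explicitly record that Case~(II) forces $k_\infty > 0$ (which in turn rests on the non-degeneracy assumption used in the proof of Theorem~\ref{T-uni}); once that is in place, the conclusion is immediate and uniform in the sign of $n_\infty$.
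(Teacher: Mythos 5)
Your proposal is correct. It differs from the paper's own proof only in how the key identity is obtained: the paper works at the steady state of the transformed $(z,x)$-system (\ref{eq:ratio-ode}), first arguing via concavity that $f'(k_\infty)z_\infty<1$ so that the bracket $z_\infty^{-1}-(\delta+n_\infty)-x_\infty$ must vanish, and then recovering $f'(k_\infty)=\rho+\delta+n_\infty$ from the second steady-state equation, whereas you bypass the ratio system entirely and simply divide the Case~(II) formulas of Theorem \ref{T-uni} by $k_\infty$ and eliminate $\delta+n_\infty$. Both routes culminate in the same identity $x_\infty=\frac{f(k_\infty)}{k_\infty}-\delta-n_\infty$ and the same strict-concavity gap $\frac{f(k_\infty)}{k_\infty}>f'(k_\infty)$ (valid since $k_\infty>0$, which you rightly flag). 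Your argument is shorter and more elementary, since the corollary's hypotheses already force Case~(II); what the paper's detour buys is a verification that $(z_\infty,x_\infty)$ is indeed a steady state of the ratio dynamics (\ref{eq:ratio-ode}), which it uses to frame the subsequent comparison of consumption-to-capital ratios across the two population regimes.
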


\begin{proof}
Since we already know from the Theorem \ref{T-uni} that \( z_\infty \neq 0 \) and \( x_\infty \neq 0 \), we get from (\ref{eq:ratio-ode}):
\[
\begin{aligned}
& 0 = \left(1 - f'\left(k_\infty\right) z_\infty\right) \left[ z_\infty^{-1} - \left(\delta + n_\infty\right) - x_\infty \right], \\
& 0 = \left[ \sigma f'\left(k_\infty\right) - z_\infty^{-1} \right] + (1 - \sigma)\left( \delta + n_\infty \right) - \sigma \rho + x_\infty.
\end{aligned}
\]
In the first equation, we analyze the product of two terms. Due to the concavity of  \( f \), we have as in (\ref{f(k)/k >= f'(k)}):
\[
\frac{f(k_\infty)}{k_\infty} > f'\left(k_\infty\right),
\]
which implies:
\[
z_\infty = \frac{k_\infty}{f(k_\infty)} < \frac{1}{f'\left(k_\infty\right)} \quad \Rightarrow \quad f'\left(k_\infty\right) z_\infty < 1.
\]
Therefore, the only way the first equation can hold is if the second term vanishes:
\[
z_\infty^{-1} - \left(\delta + n_\infty\right) - x_\infty = 0.
\]
Substituting this into the second equation yields:
\[
\sigma f'\left(k_\infty\right) - \left( \delta + n_\infty + x_\infty \right) + (1 - \sigma)\left( \delta + n_\infty \right) - \sigma \rho + x_\infty = 0.
\]
The latter simplifies to
\[
f'\left(k_\infty\right) = \delta + n_\infty + \rho,
\]
which is by the way also known from Theorem~\ref{T-uni}. Overall, the formula for the steady state for the $x$-part follows:
\[
\begin{aligned}
& x_\infty = \frac{1}{z_\infty} - \delta - n_\infty
\end{aligned}
\]
or, equivalently, 
\[
x_\infty = \frac{f(k_\infty)}{k_\infty} - \delta - n_\infty.
\]
Using the concavity of  \( f \) once again, we deduce that
\[
x_\infty > f'\left(k_\infty\right) - \delta - n_\infty = \rho.
\]
\end{proof}

%%%%%%%%%%%%%%%%

Now, we start from a strictly positive consumption level \( c_0 > 0 \), cf. Case (II). Our goal is to compare the steady-state consumption-to-capital ratio \( x_\infty = \frac{c_\infty}{k_\infty}\) under different initial labour values \( L_0 \), by distinguishing two separate cases:

\begin{itemize}
    \item In case \( L_0 < N \)
population growth rate becomes negative in the long run, i.e. \( n_\infty = -r < 0 \). From the equilibrium condition of the original system~(\ref{eq:main-ode}), the corresponding steady-state values satisfy:
\[
k_{\infty,-r} = \left(f'\right)^{-1}(\rho + \delta - r), \quad
c_{\infty,-r} = f(k_{\infty,-r}) - (\delta - r)k_{\infty,-r}.
\]
    \item In case \( L_0 \geq N \), the population growth stabilizes at zero, i.e., \( n_\infty = 0 \). The equilibrium condition then yields:
\[
k_{\infty,0} = \left(f'\right)^{-1}(\rho + \delta), \quad
c_{\infty,0} = f(k_{\infty,0}) - \delta k_{\infty,0}.
\]
\end{itemize}

In order to compare the steady-state consumption-to-capital ratios in both cases, we define the difference:
\[
D_x := x_{\infty,-r} - x_{\infty,0} = \frac{c_{\infty,-r}}{k_{\infty,-r}} - \frac{c_{\infty,0}}{k_{\infty,0}}.
\]
By using formulas above, we obtain
\[
D_x = \left[ \frac{f(k_{\infty,-r})}{k_{\infty,-r}}-f'(k_{\infty,-r})\right] - \left[\frac{f(k_{\infty,0})}{k_{\infty,0}}-f'(k_{\infty,0}) \right].
\]
Define
\[
h(k):=\frac{f(k)}{k}-f'(k),\qquad k>0.
\]
A direct differentiation yields
\[
h'(k)
=\frac{k f'(k)-f(k)}{k^2}-f''(k).
\]
The two terms on the right-hand side have opposite signs in general. In fact, the strict concavity of $f$ gives \(k f'(k)< f(k)\), so the first term is negative, while \(f''(k)<0\) makes the second term positive. Hence, the sign of \(h'(k)\) depends on the particular choice of $f$. In order to illustrate this, we consider:
\begin{itemize}
  \item Cobb--Douglas production function \(f(k)=k^\alpha\) with \(0<\alpha<1\). Then,
  \[
  h(k)=\frac{k^\alpha}{k}-\alpha k^{\alpha-1}=(1-\alpha)k^{\alpha-1}.
  \]
  Since \(\alpha-1<0\), we have \(h'(k)<0\). In this case, \(h\) is strictly decreasing, and, thus, \(D_x<0\).
\item Logarithmic production function $f(k)=\ln(1+k)$.
We have
\[
h(k) = \frac{\ln(1+k)}{k} - \frac{1}{1+k}, \qquad
h'(k) = \frac{\tfrac{k}{1+k} - \ln(1+k)}{k^{2}} + \frac{1}{(1+k)^{2}}.
\]
The derivative $h'(k)$ has the unique zero $k^*_{\log} \approx 2.16258$.
Moreover, $h$ is increasing on $(0,k^*_{\log})$ and decreasing on $(k^*_{\log},\infty)$. Thus, the sign of \(D_x\) becomes dependent on whether $k_{\infty,-r}$ and $k_{\infty,0}$ lie in one interval or another.
E.g., letting the  parameters $\delta = 0.075$, $\rho = 0.02$, $r = 0.025$, we have
    \[
    k_{\infty,-r} = 0.042857, \quad k_{\infty,0} = 0.005263, \quad D_x \approx -0.00123.
    \]
    For parameters $\delta = 0.75$, $\rho = 0.2$, $r = 0.25$ it holds:
    \[
    k_{\infty,-r} = 0.428571, \quad k_{\infty,0} = 0.052632, \quad D_x \approx 0.107669.
    \]
\item Constant absolute risk-aversion production function $f(k)=1-e^{-k}$. Here,
\[
h(k) = \frac{1-e^{-k}}{k} - e^{-k}, \qquad
h'(k) = \frac{k e^{-k} - 1 + e^{-k}}{k^{2}} +  e^{-k}.
\]
Numerical computations yield that
$k^*_{\text{CARA}} \approx 1.79328$
is the unique zero of $h'(k)$.
Moreover, $h$ is increasing on $(0,k^*_{\text{CARA}})$ and decreasing on $(k^*_{\text{CARA}},\infty)$. Again, the sign of \(D_x\) depends on $k_{\infty,-r}$ and $k_{\infty,0}$.
E.g., for the parameters $\delta = 0.075$, $\rho = 0.02$, $r = 0.025$ we have
    \[
    k_{\infty,-r} = 0.035667, \quad k_{\infty,0} = 0.005129, \quad D_x \approx -0.00145.
    \]
If we change the parameters into $\delta = 0.75$, $\rho = 0.2$, $r = 0.25$, the situation becomes different:
    \[
    k_{\infty,-r} = 0.356675, \quad k_{\infty,0} = 0.051293, \quad D_x \approx 0.116316.
    \]
%  \item \emph{Quadratic production function:} Consider \(f(k)=a k - b k^2\) with \(a,b>0\). Then
%  \[
%  h(k)=\frac{a k - b k^2}{k}-(a-2b k)=b k,
%  \]
%  which satisfies \(h'(k)=b>0\). Thus here \(h\) is strictly increasing, and, consequently, \(D_x>0\).
\end{itemize}

\section{Numerical simulations}
 We choose the well-known CES production function:

\begin{equation}
F(K,L) =  \left[ \alpha K^{\tau} + (1 - \alpha) L^{\tau} \right]^{\frac{1}{\tau}},
\end{equation}
where $\alpha$ is the capital distribution parameter with $0 < \alpha < 1$, and  $\tau$ is the substitution elasticity parameter with $-\infty < \tau < 1$.
Then, the output per unit of labour is
\begin{equation}
f(k)=\frac{F(K,L)}{L}=\left( \alpha k^{\tau} + (1 - \alpha) \right)^{\frac{1}{\tau}}.
\label{CES-f}
\end{equation}
Then, we can obtain its derivative:
\begin{equation}
 f^\prime(k)=\alpha \left( \alpha k^{\tau} + (1 - \alpha) \right)^{\frac{1}{\tau} - 1} k^{\tau - 1}.   
\end{equation}
 The parameters related to capital are chosen as follows:
$$
\alpha=0.3, \quad \delta=0.075 , \quad \tau=0.01,
\quad
\rho=0.02,  \quad \sigma=0.01.
$$
The parameters determining the population dynamics are set as
$$
r=0.025, \quad N=1, \quad M=2.
$$
The behavior of capital per labour under the Allee effect aligns as expected in Case (II), see Figure \ref{k-comparing}. 
\begin{figure}[htp]
    \centering
    \includegraphics[width=0.8\textwidth,height=0.6\textwidth]{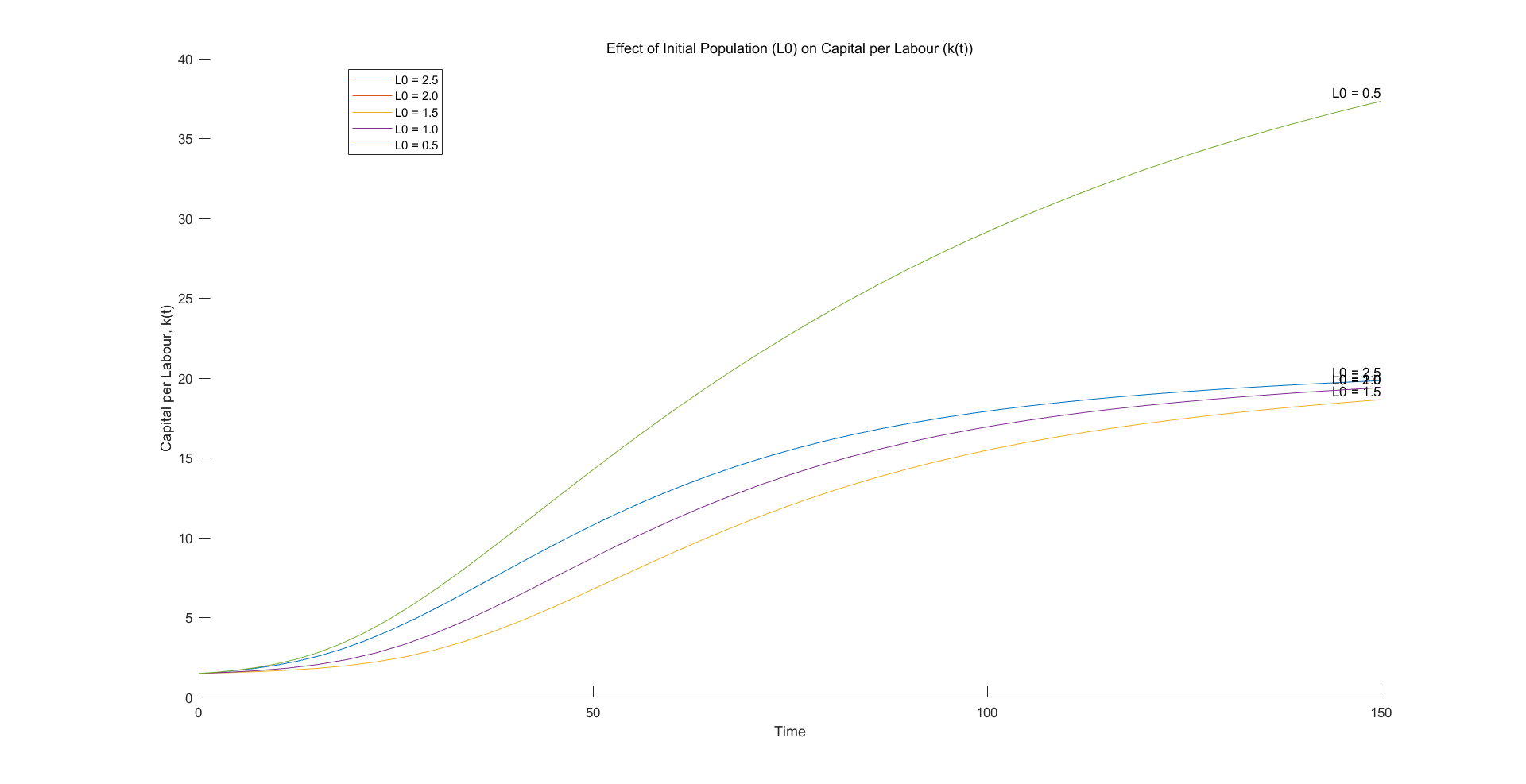}
    \caption{$k(t)$ with different initial values of $L_0$}
    \label{k-comparing}
\end{figure}
The capital per labour stabilizes at a higher level in case of $L_0 < N$ compared to $L_0 > N$.
Also the consumption per labour stabilizes at a higher level, see Figure \ref{c-comparing}.
\begin{figure}[htp]
    \centering
    \includegraphics[width=0.8\textwidth,height=0.6\textwidth]{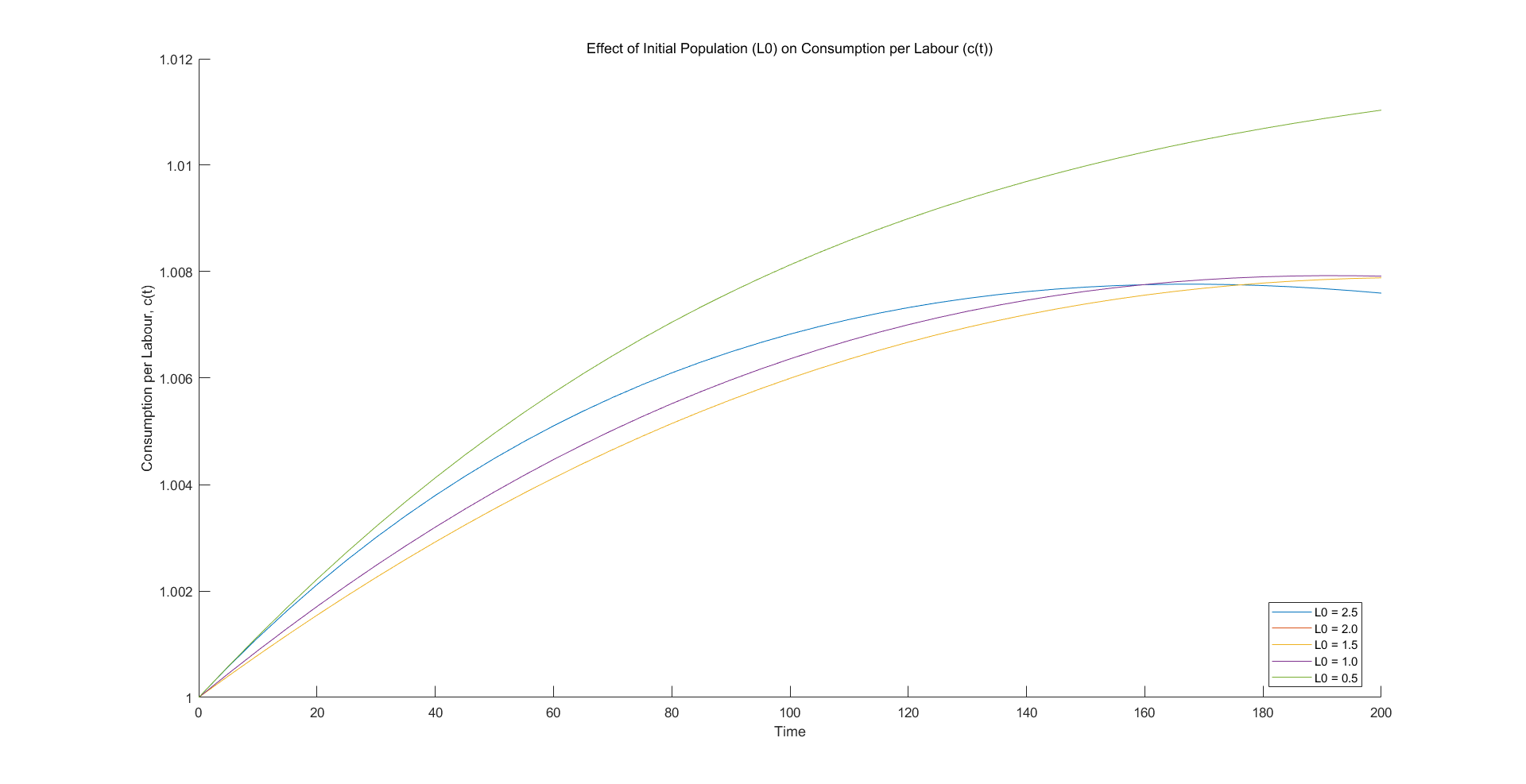}
    \caption{$c(t)$ with different initial values of $L_0$}
    \label{c-comparing}
\end{figure}

In order to illustrate the blow-up effect, which occurs if the stability condition $\delta>r$ is violated, let us take $r=0.085.$
If starting below the Allee threshold $N$, the capital per labour explodes, see Figure \ref{ex-blow}.

\begin{figure}[htp]
    \centering
    \includegraphics[width=0.8\textwidth,height=0.6\textwidth]{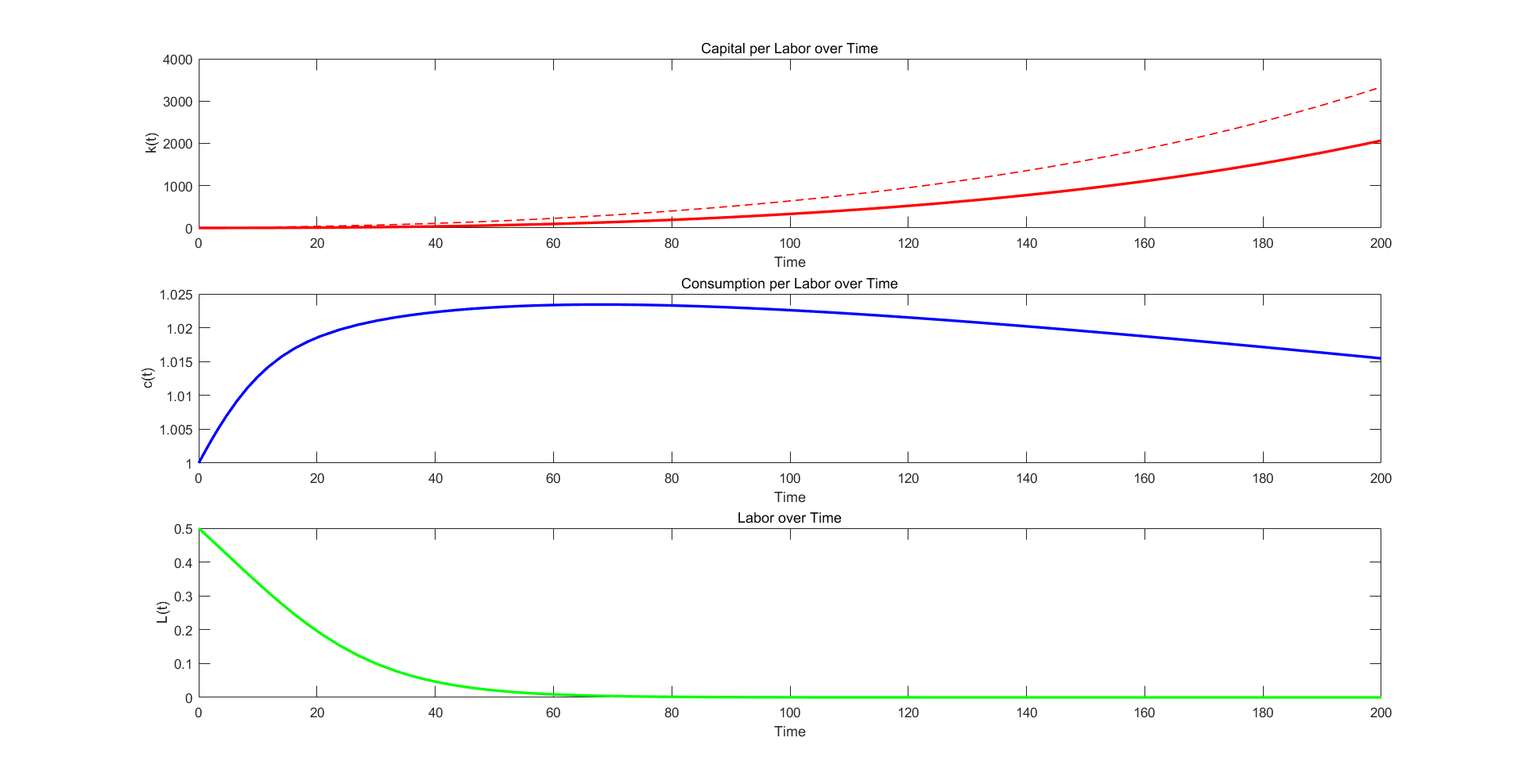}
    \caption{Blow-up effects}
    \label{ex-blow}
\end{figure}

\newpage

\newpage

\medskip
% The information below will be filled in by AIMS production staff.
Received xxxx 20xx; revised xxxx 20xx; early access xxxx 20xx.
\medskip

\end{document}